\documentclass{amsart}
\usepackage[utf8]{inputenc}
\usepackage{amsfonts}
\usepackage{amsmath}
\usepackage{mathtools}
\usepackage{amsthm, mathrsfs}
\usepackage{amssymb,color}
\usepackage{latexsym}
\usepackage{enumerate}
\usepackage{tikz-cd} 
\usepackage{enumitem}
\usepackage{hyperref}
\usepackage{rotating}
\usepackage{geometry}

\newtheorem{theorem}{Theorem}[section]
\newtheorem{lemma}[theorem]{Lemma}
\newtheorem{proposition}[theorem]{Proposition}
\newtheorem{corollary}[theorem]{Corollary}

\numberwithin{equation}{section}

\theoremstyle{definition}
\newtheorem{definition}[theorem]{Definition}

\theoremstyle{remark}

\newcommand{\A}{\mathcal{A}}

\newcommand{\B}{\mathcal{B}}

\newcommand{\Q}{\mathcal{Q}}

\newcommand{\ZZ}{\mathbb{Z}}
\newcommand{\QQ}{\mathbb{Q}}
\newcommand{\vphi}{\varphi}

\newcommand{\la}{\langle}
\newcommand{\ra}{\rangle}

\newcommand{\Aff}{\operatorname{Aff}}

\title{A class of AF-algebras up to universal UHF-algebra stability}
\author{Saeed Ghasemi}
\address{Institute of Mathematics, Czech Academy of Sciences, Czech Republic}
\email{\texttt{ghasemi@math.cas.cz}}
\thanks{This research is supported by the GA\v CR project 19-05271Y and RVO: 67985840}

\begin{document}

\maketitle
\begin{abstract}
    We will show that separable unital AF-algebras whose Bratteli diagrams do not allow converging two nodes into one node, can be classified up to the tensor product with the universal UHF-algebra $\Q$ only by their trace spaces. That is, if $\A$ and $\B$ are such AF-algebras, then $T(A)=T(B)$  if and only if $\A\otimes \Q \cong \B\otimes \Q$.
\end{abstract}

\null

\section{Introduction}
UHF-algebras were first studied and classified by Glimm \cite{Glimm}. To any UHF-algebra one assigns a unique a ``supernatural number" (and vise versa) which is a complete isomorphism invariant.  More generally, later 
separable AF-algebras were classified by Elliott \cite{Elliott} using their $K_0$-groups. In the unital case, Elliott's theorem assigns to a unital separable AF-algebra  $\A$ a unique ``dimension group with order-unit"  $\la K_0(\A), K_0(\A)^+, [1_\A]_0 \ra$ (and vice versa), as a complete isomorphism invariant. Since then, the aim of Elliott's classification program has been to classify more classes of separable nuclear C*-algebras. The so called ``strongly self-absorbing" C*-algebras play a particularly important role in the classification program. In fact, the classification results are usually obtained up to tensoring with one of the C*-algebras in the short list of known strongly self-absorbing C*-algebras (refer to \cite{Winter-classification} or \cite{Rordam-Stormer}). In the classification of separable nuclear C*-algebras in addition to the K-theoretic data, the ``trace space" of a C*-algebra, which is tightly related to the state space of the $K_0$-group, is also usually used as an invariant.  In these notes we show that a fairly rich class of AF-algebras can be classified up to tensoring with the ``universal UHF-algebra" $\Q$ (up to $\Q$-stability) using only the trace space.  The universal UHF-algebra is strongly self-absorbing and is the UHF-algebras with $K_0$-group isomorphic to the additional group of all rational numbers $\QQ$ with the usual ordering and 1 as order-unit.

The AF-algebras that we consider are unital and their Bratteli diagrams do not allow edges of the form
$$
\begin{tikzcd}[column sep=small, row sep=tiny]
\bullet \arrow[dr]& \\
& \bullet \\
\bullet \arrow[ru]&
\end{tikzcd}
$$
Equivalently, these separable unital AF-algebras are precisely the ones that arise as the limit of a direct sequence of finite-dimensional C*-algebras where the connecting maps have exactly one non-zero entry in each row in their representing matrices.
We will denote this class of unital AF-algebras by
 $\overrightarrow{\mathfrak D}$. It contains all the UHF-algebras (for which Theorem \ref{main-thm} is a tautology), all the commutative C*-algebras $C(X)$  for compact, second countable and totally disconnected spaces $X$ (because every such $X$ is the inverse limit of an inverse sequence of finite sets and  surjective maps). Also  $\overrightarrow{\mathfrak D}$ is closed under direct sums and tensor products.
 
For a unital C*-algebra $\A$,  the space of all tracial states of $\A$ is denoted by $T(\A)$. Since $\Q$ has a unique tracial state, it is known and easy to check that $T(\A)\cong T(\A \otimes \Q)$ for any unital C*-algebra $\A$. In particular, for unital AF-algebras $\A, \B$, if $\A \otimes \mathcal Q\cong \B\otimes \mathcal Q$ then we always have $T(\A)\cong T(\B)$ (i.e. they are affinely homeomorphic simplices).
\begin{theorem}\label{main-thm}
Suppose $\A$ and $\B$ are unital AF-algebras in  $\overrightarrow{\mathfrak D}$. Then $T(A)\cong T(B)$ if and only if $\A \otimes \mathcal Q\cong \B\otimes \mathcal Q$.
\end{theorem}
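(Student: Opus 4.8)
The plan is to invoke Elliott's classification theorem for unital AF-algebras to replace the statement $\A\otimes\Q\cong\B\otimes\Q$ by an isomorphism of pointed ordered $K_0$-groups, and then to show that for algebras in $\overrightarrow{\mathfrak D}$ both the trace space and this $K_0$-group are governed by the same combinatorial invariant of the Bratteli diagram; the Bratteli-diagram multiplicities, which the defining condition on $\overrightarrow{\mathfrak D}$ is precisely designed to tame, will drop out of both invariants.

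First I would analyse the trace space. Write $\A=\varinjlim A_n$ with $A_n=\bigoplus_{v\in V_n}M_{d_n(v)}$ and unital connecting maps, which we may take injective. The hypothesis $\A\in\overrightarrow{\mathfrak D}$ means that each block of $A_{n+1}$ is fed from a single block of $A_n$, so $v\mapsto p_n(v)$ is a genuine map of finite sets $p_n\colon V_{n+1}\to V_n$, surjective because the connecting maps are unital and injective. Passing to traces turns the direct limit into an inverse limit, $T(\A)\cong\varprojlim T(A_n)$, and $T(A_n)=\operatorname{Prob}(V_n)$ with bonding maps the push-forwards of probability measures induced by the $p_n$; since $\operatorname{Prob}$ commutes with inverse limits of finite sets, $T(\A)\cong\operatorname{Prob}(X_\A)$, where $X_\A:=\varprojlim(V_n,p_n)$ is a compact metrizable zero-dimensional space. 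In particular $T(\A)$ is a Bauer simplex whose extreme boundary is canonically homeomorphic to $X_\A$, and the multiplicities $d_n$ never entered.

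Next I would compute the target $K_0$-group. As $\A\otimes\Q$ is again a unital AF-algebra, $K_0(\A\otimes\Q)=K_0(\A)\otimes\QQ=\varinjlim(\QQ^{V_n},\psi_n)$, where $\psi_n$ sends the $w$-th generator to $\sum_{p_n(v)=w}e_n(v)\cdot(v\text{-th generator})$, the $e_n(v)\ge 1$ being the edge multiplicities. Over $\QQ$ each $e_n(v)$ is invertible, and a level-by-level diagonal rescaling of the generators by positive rationals, defined recursively so as to cancel the $e_n(v)$, conjugates this system onto $\varinjlim(\QQ^{V_n},p_n^{*})$, with $p_n^{*}$ the pullback of functions along $p_n$; this last limit is exactly the group $C(X_\A,\QQ)$ of locally constant $\QQ$-valued functions on $X_\A$. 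Under the identification the order-unit $[1_{\A\otimes\Q}]$ becomes a positive locally constant function, which a final rescaling by its reciprocal moves to $\mathbf 1$; and the positive cone becomes precisely the pointwise non-negative functions — an element of the limit is positive iff it is already non-negative at any level at which it is represented, which uses surjectivity of the $p_n$ together with the divisibility in $K_0(\Q)=\QQ$. Hence $\bigl(K_0(\A\otimes\Q),K_0(\A\otimes\Q)^{+},[1_{\A\otimes\Q}]\bigr)\cong\bigl(C(X_\A,\QQ),C(X_\A,\QQ)^{+},\mathbf 1\bigr)$, and likewise for $\B$.

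Assembling the pieces: the implication from right to left is the remark recorded just before the statement (tensoring with $\Q$ does not change the trace space). Conversely, if $T(\A)\cong T(\B)$ then, comparing the extreme boundaries of these two Bauer simplices, $X_\A\cong X_\B$; any homeomorphism $X_\A\cong X_\B$ induces by pullback an isomorphism of pointed ordered groups $C(X_\B,\QQ)\cong C(X_\A,\QQ)$, hence by the previous paragraph an isomorphism $\bigl(K_0(\A\otimes\Q),\dots\bigr)\cong\bigl(K_0(\B\otimes\Q),\dots\bigr)$, and Elliott's theorem then gives $\A\otimes\Q\cong\B\otimes\Q$. I expect the only real work to lie in the $K_0$-computation — making precise that the rescaling of the direct system is legitimate over $\QQ$ and that it transports both the positive cone and the order-unit correctly, the slightly delicate point being that positivity in the limit is already witnessed at a finite stage.
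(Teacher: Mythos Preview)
Your argument is correct and takes a genuinely different route from the paper. The paper works entirely on the dimension-group side: it first proves that for $G\in\overrightarrow{\mathfrak D}$ any two order-units give $\sim_{\QQ}$-equivalent groups, then embeds $\la G,u\ra$ and $\la H,v\ra$ into $\Aff(S(G,u))\cong\Aff(S(H,v))$ via the natural affine representation (using that such $G$ are archimedean), and finally builds an approximate intertwining with rational maps between the finite-dimensional stages of these $\Aff$-spaces to produce an isomorphism $\la G,w\ra\cong\la H,z\ra$ for \emph{some} order-units, whence the order-unit lemma finishes. You instead extract from the Bratteli diagram the single combinatorial invariant $X_\A=\varprojlim(V_n,p_n)$, show $T(\A)\cong\operatorname{Prob}(X_\A)$ (so $X_\A$ is recovered as the extreme boundary of the Bauer simplex $T(\A)$), and show by an explicit diagonal rescaling that $K_0(\A\otimes\Q)\cong C(X_\A,\QQ)$ as ordered groups with unit; both directions then follow immediately.

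Your approach is more transparent about \emph{why} the class $\overrightarrow{\mathfrak D}$ behaves well: the non-mixing hypothesis is exactly what makes the Bratteli diagram encode an honest inverse system of finite sets, and tensoring with $\Q$ is exactly what lets the edge multiplicities be absorbed by units. It also yields the sharper statement that $\A\otimes\Q$ is classified by the zero-dimensional compact metrizable space $X_\A$, and it bypasses both the order-unit-change machinery and the approximate-intertwining argument. The paper's route, on the other hand, stays within the standard state-space/affine-function formalism and isolates a reusable lemma (order-unit independence up to $\sim_{\QQ}$) that is of independent interest. The one place where you should be a bit more careful in writing it up is the identification $T(A_n)\cong\operatorname{Prob}(V_n)$: the natural parametrisation of tracial states on $\bigoplus_v M_{d_n(v)}$ by probability vectors already involves the dimensions $d_n(v)$, and it is the unitality of the connecting maps together with $d_{n+1}(v)=e_n(v)\,d_n(p_n(v))$ that makes the induced bonding map equal to the push-forward $(p_n)_*$ --- the cancellation is real but deserves one explicit line.
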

For any unital AF-algebra $\A$, its trace space $T(\A)$ is affinely homeomorphic to ``state space" of the dimension group with order-unit $\la K_0(\A), K_0(\A)^+, [1_\A]_0 \ra$ via the map which sends $\tau$ to $K_0(\tau)$ (see \cite[Proposition 1.5.5]{Rordam-Stormer}).  Therefore Theorem \ref{main-thm} (as well as the whole theory of unital AF-algebras) can be restated in the language of dimension groups with order-units and their state spaces (Theorem \ref{thm-D}). 

\section*{Preliminaries}
An (partially) ordered abelian group $G$ with a fixed order-unit $u$ is denoted by $\la G, u \ra$.
Suppose $G_1, G_2, \dots G_k$ are  ordered abelian groups. The tensor product  $G= G_1\otimes G_2 \otimes \dots\otimes G_k$, as defined in \cite{G-H}, is an ordered abelian group with the positive cone $G^+$ defined as the collection of all finite sums of the elements of the set 
$$
\{x_1\otimes \dots \otimes x_k: x_i\in G_i^+\}.
$$

If $u_i$ is an order-unit of $G_i$ then $u_1\otimes \dots \otimes u_k$ is an order-unit of $G$ (\cite[Lemma 2.4]{G-H}). Tensor products of positive homomorphisms are positive homomorphisms and if $G$ and $H$ are respective direct limits of sequences $(G_i, \alpha_i^j)$ and $(H_i, \beta_i^j)$ of ordered abelian groups, then $G\otimes H$ is the direct limit of the sequence $(G_i\otimes H_i, \alpha_i^j\otimes \beta_i^j)$ (\cite[Lemma 2.2]{G-H}).

  We always consider the abelian group of rational numbers $\QQ$ with its usual ordering and the order-unit 1. In the following write simply $\QQ$ instead of $\la \QQ, 1 \ra$. There is a one-to-one correspondence between the subgroups of $\QQ$ which contain 1 and the supernatural numbers (see \cite[Proposition 7.4.3]{Rordam-K-theory}). For a supernatural number $n$ we denote the corresponding subgroup of $\QQ$ by $\QQ_n$. Multiplication of supernatural numbers is defined naturally as an extension of the multiplication of the natural numbers. If $m$ is also a supernatural number then $\QQ_n\otimes \QQ_m\cong \QQ_{nm}$, i.e., they are isomorphic as ordered groups with order-units. In particular, $\mathbb Q_n \otimes \QQ\cong \QQ$ for every supernatural number $n$.

 Recall that an ordered abelian group is archimedean if $nx\geq y$ for every $n\in \mathbb N$ implies that $x\leq 0$. 
 \begin{proposition} \label{arch}
 Suppose $G$ is an ordered abelian group and $n$ is a supernatural number. Then $G$ is archimedean if and only if $G\otimes \QQ_n$ is archimedean.
 \end{proposition}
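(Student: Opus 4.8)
The plan is to prove both directions by relating archimedean-ness of $G$ to that of $G \otimes \QQ_n$ using the concrete description of the tensor product's positive cone. For the easy direction, suppose $G \otimes \QQ_n$ is archimedean. Since $1 \in \QQ_n$, the map $x \mapsto x \otimes 1$ is a positive group homomorphism $G \to G \otimes \QQ_n$; one should first check it is an order embedding (this uses that $\QQ_n$ is torsion-free and that $1$ generates a direct summand-like situation — more carefully, that $G \to G \otimes \QQ_n$ is injective with $x \otimes 1 \geq 0$ implying $x \geq 0$, which follows because any expression of $x \otimes 1$ as a sum $\sum y_i \otimes r_i$ with $y_i \in G^+$, $r_i \in \QQ_n^+$ can be cleared of denominators). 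Granting the embedding, if $nx \geq y$ in $G$ for all $n \in \mathbb N$, then $n(x\otimes 1) \geq y \otimes 1$ in $G \otimes \QQ_n$ for all $n$, so $x \otimes 1 \leq 0$, hence $x \leq 0$.

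The substantive direction is: $G$ archimedean implies $G \otimes \QQ_n$ archimedean. Take $z \in G \otimes \QQ_n$ with $mz \geq w$ for all $m \in \mathbb N$, where $w \in (G\otimes \QQ_n)^+$; we must show $z \leq 0$. First I would clear denominators: choose a natural number $k$ dividing $n$ (a finite divisor) large enough that $kz$ and $kw$ both lie in the image of $G \otimes \tfrac{1}{k}\ZZ \cong G$, writing $kz = g \otimes \tfrac1k \cdot k = g$ under the identification, i.e.\ reduce to showing that if $g \in G$ and $mg \otimes \tfrac1k \geq h \otimes \tfrac1k$ in $G \otimes \QQ_n$ for all $m$, then $g \otimes \tfrac1k \leq 0$. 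The key lemma to isolate is a description of the order on $G \otimes \QQ_n$ restricted to the copy of $G$ sitting as $G \otimes \tfrac1k\ZZ$: an element $g \otimes \tfrac1k$ is positive in $G \otimes \QQ_n$ if and only if there is a finite divisor $\ell$ of $n$ with $k \mid \ell$ such that $\tfrac{\ell}{k} g \in G^+$ (this is essentially saying the positive cone of $G \otimes \QQ_n$ is the directed union over finite divisors $\ell \mid n$ of the positive cones of the copies $G \otimes \tfrac1\ell \ZZ \cong G$, which in turn uses $\QQ_n = \varinjlim \tfrac1\ell \ZZ$ and that tensor product commutes with direct limits, as recalled in the preliminaries). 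With this, the hypothesis $mg \otimes \tfrac1k \geq h\otimes\tfrac1k$ translates, for each $m$, into: there is a finite divisor $\ell_m \mid n$ with $\tfrac{\ell_m}{k}(mg) - \tfrac{\ell_m}{k} h \in G^+$, i.e.\ $\ell_m m \cdot g \geq \ell_m h$ in $G$. Since $\ell_m \geq 1$ this gives $\ell_m m \cdot g \geq \ell_m h \geq $ (something) — but one must be careful because $h$ need not be positive in $G$ and $\ell_m$ varies; the clean move is instead to fix one witness $\ell_0$ for the single inequality $w \geq 0$ in $G \otimes \QQ_n$, use it to write $w = h \otimes \tfrac{1}{\ell_0}$ with $h \in G^+$, absorb $k$ into $\ell_0$, and then for $mz \geq w$ obtain natural numbers $\ell_m$ and elements of $G^+$ witnessing $\ell_m(\text{numerator of } mz - w) \in G^+$, from which $N m \cdot (\text{fixed } g) \geq (\text{fixed positive } h)$ for suitable $N$, and then divide through. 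Running this through, archimedean-ness of $G$ applied to the element $g$ (with $y = h$, after rescaling the inequality $Nm g \geq h$ to $m g \geq h$ for all $m$, which holds since $N \geq 1$ so $Nm$ ranges over a cofinal set) yields $g \leq 0$, hence $z = g \otimes \tfrac1k \leq 0$ (positivity of $-z$ has the obvious witness $\ell = k$).

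The main obstacle, and the step deserving the most care, is the bookkeeping with the varying finite divisors $\ell_m$: the naive argument produces a different denominator for each $m$, and one needs the inequality $mg \geq y$ in $G$ itself (for a single fixed $y$ and all $m$) to invoke archimedean-ness. The resolution is that whatever $\ell_m$ appears, we get $\ell_m m\, g \geq \ell_m h$ with $h \in G^+$ fixed, hence $\ell_m m\, g \geq h$ (using $\ell_m \geq 1$ and $h \geq 0$, so $\ell_m h \geq h$), and since $\{\ell_m m : m \in \mathbb N\}$ contains $\{m : m \in \mathbb N\}$ up to passing to a cofinal subset — indeed for each $p$ we may take $m = p$ and get $\ell_p p\, g \geq h$ with $\ell_p p \geq p$, and monotonicity of the original system $mz \geq w$ in $m$ lets us replace $\ell_p p$ by any larger integer — we conclude $m g \geq h$ for all $m$, so $g \leq 0$. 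I would double-check the monotonicity point and the claim that $\ell_m h \ge h$ when $h \ge 0$ and $\ell_m \ge 1$, both of which are elementary in an ordered group but are exactly where the argument could silently go wrong.
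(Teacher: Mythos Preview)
Your overall plan matches the paper's: use the positive embedding $x\mapsto x\otimes 1$ for one direction, and clear denominators to reduce to $G$ for the other. But there is a genuine error in how you set up the hard direction. You assume ``$mz\geq w$ for all $m\in\mathbb N$'' with $w\in(G\otimes\QQ_n)^+$ and aim to conclude $z\leq 0$; this implication is simply false in any nonzero ordered group (take $z$ any positive element and $w=0$). You have reproduced the typo in the paper's \emph{stated} definition; the paper's \emph{proof} correctly works with $nx\leq y$ throughout. Your downstream device---using $h\geq 0$ and $\ell_m\geq 1$ to pass from $\ell_m m\,g\geq \ell_m h$ to $\ell_m m\,g\geq h$---is tailored to the wrong inequality and does not survive the correction: with the correct hypothesis one gets $\ell_m m\,g\leq \ell_m h$, and now $\ell_m h$ moves in the wrong direction, giving no fixed upper bound for $mg$.

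The paper avoids the whole ``varying $\ell_m$'' difficulty you worry about by choosing a single common denominator once and for all. Writing $x=X\otimes\tfrac{1}{b}$ and $y=Y\otimes\tfrac{1}{d}$, one has $y-nx=(bY-ndX)\otimes\tfrac{1}{bd}$ with the \emph{same} $\tfrac{1}{bd}$ for every $n$; from positivity of this element the paper reads off $bY-ndX\geq 0$ in $G$, hence $n(dX)\leq bY$ for all $n$, and archimedean-ness of $G$ yields $dX\leq 0$ and then $x\leq 0$. No per-$n$ denominator bookkeeping is needed. If you want to retain your direct-limit viewpoint, the fix is the same: pick the common denominator for $z$ and $w$ first, so that the witness $\ell$ can be chosen independently of $m$.
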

\begin{proof}
The map $x \to x\otimes 1$ is a positive embedding from $G$ into $G\otimes \QQ_n$. Therefore if $G$ is not archimedean it is clear that $G\otimes \QQ_n$ is not archimedean. Assume $G$ is archimedean.
Suppose $x,y\in G\otimes \QQ_n$ and $nx\leq y$ for every natural number $n$.  Suppose $x=\sum_{i=1}^r x_i\otimes \frac{a_i}{b_i}$ and $y=\sum_{j=1}^s y_j\otimes \frac{c_j}{d_j}$. By changing the sign of the integers $a_i$ and $c_j$, if necessary, we may assume $b_i$ and $d_j$ are positive natural numbers for every $i\leq r$ and $j\leq s$. Let $b=b_1 \dots b_r>0$, $d= d_1 \dots d_s>0$, $k_i= a_ib/b_i\in \mathbb N$ and $l_j=c_j d/d_j\in \mathbb N$.  Then we have $x=(\sum_i k_i x_i)\otimes \frac{1}{b}$ and $y=(\sum_j l_j y_j)\otimes \frac{1}{d}$. For every $n$
$$
y-nx = (\sum_j l_j y_j)\otimes \frac{1}{d} - n (\sum_i k_i x_i)\otimes \frac{1}{b} = (b\sum_j l_j y_j - nd \sum_i k_i x_i) \otimes \frac{1}{bd} \geq 0.
$$
Since $\frac{1}{bd}>0$ we must have $b\sum_j l_j y_j - nd \sum_i k_i x_i\geq 0$ for every $n$. Since $G$ is archimedean this implies that $d\sum_i k_i x_i \leq 0$. As $d$ is a positive integer, we have $\sum_i k_i x_i \leq 0$ which implies that $x=(\sum_i k_i x_i)\otimes \frac{1}{b}\leq 0$.
\end{proof}

For any non-negative integer $r$, the ordered abelian group $\ZZ^r$ equipped with the positive cone 
$$(\mathbb Z^r)^+ = \{(x_1, x_2, \dots,x_r)\in \mathbb Z^r: x_i\geq 0\}.$$
  is called a \emph{simplicial group}. 
A (not necessarily countable) partially ordered abelian group $\la G, G^+ \ra$ is called a \emph{dimension group} if it is directed, unperforated interpolation
group. We refer the reader to \cite{Goodearl-book} for definitions and more on dimension groups. By a well-known result of Effros,
Handelman and Shen \cite{EHS} any dimension group (with order-unit) is isomorphic to a direct limit of a direct system of simplicial groups (with order-units) and positive (order-unit preserving) homomorphisms, in the category of ordered abelian groups (with order-units).
In particular, a countable dimension group $\la G, G^+, u \ra$ with order-unit $u$ (we would simply write $\la G, u \ra$ in the following) is isomorphic to the direct limit of a sequence simplicial groups and normalized (order-unit preserving)  positive homomorphisms
$$\la\mathbb Z^{r_1} , u_1 \ra \xrightarrow{\alpha_1^2}\la\mathbb Z^{r_2}, u_2\ra \xrightarrow{\alpha_2^3}\la\mathbb Z^{r_3}, u_3 \ra \xrightarrow{\alpha_3^4} ~~\dots ~~\la G, u \ra. $$

 Tensor product of two dimension groups with order-units is again a dimension group with order-unit. In fact, their tensor products correspond to the tensor products of the corresponding AF-algebras. That is, if $\A$ and $\B$ are AF-algebras, then $\la K_0(\A\otimes \B), [1]_{\A\otimes \B}\ra \cong \la K_0(\A), [1]_\A\ra \otimes \la K_0(\B), [1]_\B\ra $ (\cite[Proposition 3.4]{G-H}).

We are only concerned here with countable dimension groups and therefore by a dimension group we mean a countable one.
Clearly $\QQ$ and its subgroups (with the inherited ordering) are dimension groups. 
Every ordered subgroup of $\QQ$ containing 1 is isomorphic as an ordered abelian group with order-unit to the limit of a sequence of simplicial groups and normalized positive homomorphisms  $\la\ZZ, n_1 \ra \to \la \ZZ, n_2 \ra \to \dots$ for some sequence $\{n_i\}$ of natural numbers, such that $n_i|n_{i+1}$ for each $i$. 
We say  such a sequence $\{n_i\}$ of natural numbers is \emph{associated} to the supernatural number $n$ if the limit of the directed sequence $\la\ZZ, n_1 \ra \to \la \ZZ, n_2 \ra \to \dots$ is isomorphic to $\QQ_n$. Note that this is different from the usual unique representation of $n$ as extended powers of prime numbers, since it is not even uniquely associated to $n$. However, it is convenient to use for our purpose. For a natural number $m$ and $x=(x_1, x_2, \dots, x_r)\in \ZZ^r$ we write $mx$ for $(mx_1, mx_2, \dots, mx_r)$.
If $\la G,u \ra =\varinjlim (\ZZ^{r_i}, u_i, \alpha_i^j)$ is a dimension group with order-unit and $\{n_i\}$ is any sequence associated to the supernatural number $n$, then $\la G,u\ra \otimes \QQ_n =\varinjlim (\ZZ^{r_i},n_iu_i,  \frac{n_j}{n_i}\alpha_i^j)$ (cf. \cite[Lemma 2.2]{G-H}).

\begin{definition}\label{eq-def}
Define the equivalence relation $\sim_{\QQ}$ on the set of all  dimension groups with order-units by 
$$
\la G, u\ra \sim_{\QQ} \la H,v \ra \quad \Leftrightarrow \quad \la G, u\ra \otimes \mathbb Q\cong \la H,v \ra \otimes \mathbb Q.
$$
\end{definition}
Equivalently, $\la G, u\ra \sim_{\QQ} \la H,v \ra$ if and only if there are supernatural numbers $n,m$ such that $\la G, u\ra \otimes \mathbb Q_m\cong \la H,v \ra \otimes \mathbb Q_n$ as ordered abelian groups with order-units.

\section{Dimension groups with positive non-mixing connecting maps}

We say a homomorphism $\alpha: \ZZ^r \to \ZZ^s$ is \emph{non-mixing} if  $\alpha(x_1, \dots, x_r) = (k_1x_{i_1}, \dots, k_sx_{i_s})$ where $i_j\in\{1, \dots r\}$ for each $j\leq s$. A non-mixing homomorphism is positive if and only if each $k_j$ is a positive integer. 
Let $\overrightarrow{\mathfrak D}$ denote the class of all dimension groups $G$ that arise as the limit of a direct sequence of simplicial groups and positive non-mixing homomorphisms $(G_i, \alpha_i^j)$.  
Since positive non-mixing homomorphisms send order-units to order-units, the image of each order-unit of each $G_i$ in $G$ is an order-unit of $G$. Note that positive non-mixing homomorphisms are not necessarily embeddings.  A dimension groups that is inductive limit of a sequence of simplicial groups with injective connecting maps is called \emph{ultrasimplicial} \cite{Elliott-t-o-g}. Not all dimension groups are ultrasimplicial \cite[Example 2.7]{Elliott-t-o-g}.
\begin{proposition}
Suppose $G$ is a dimension group in $\overrightarrow{\mathfrak D}$ 
\begin{enumerate}
    \item $G$ is ultrasimplicial.
    \item $G$ is archimedean.
\end{enumerate}
\end{proposition}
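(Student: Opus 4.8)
The plan is to exploit the rigidity of non-mixing connecting maps. Writing $G=\varinjlim(G_i,\alpha_i^j)$ with each $G_i=\ZZ^{r_i}$ simplicial, I would first record that a composition of positive non-mixing homomorphisms is again one: each composite $\alpha_i^j$ has the form $\alpha_i^j(x)_l=k^{i,j}_l\,x_{p_{i,j}(l)}$ for positive integers $k^{i,j}_l$ and functions $p_{i,j}\colon\{1,\dots,r_j\}\to\{1,\dots,r_i\}$ with $p_{i,k}=p_{i,j}\circ p_{j,k}$. From this, $\ker\alpha_i^j$ is the coordinate subgroup of $G_i$ spanned by $\{e^i_m : m\notin\operatorname{im}(p_{i,j})\}$; since $\operatorname{im}(p_{i,j})$ decreases in $j$ (and so stabilizes to a set $S_i$), the kernel $K_i:=\ker(\alpha_i^\infty\colon G_i\to G)=\bigcup_{j\ge i}\ker\alpha_i^j$ is precisely the coordinate subgroup spanned by $\{e^i_m : m\notin S_i\}$. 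Identifying $K_i$ with a coordinate subgroup in this way is the one point requiring a little care, and it is what drives both parts.

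For (1), I would pass to the quotient system $H_i:=G_i/K_i$ with the induced maps $\bar\alpha_i^j$, which are well defined because $\alpha_j^\infty\circ\alpha_i^j=\alpha_i^\infty$ forces $\alpha_i^j(K_i)\subseteq K_j$. Since $K_i$ is a coordinate subgroup, $H_i$ with its quotient ordering is the simplicial group $\ZZ^{S_i}$. Each $\bar\alpha_i^j$ is injective: $\alpha_i^j(x)\in K_j$ gives $\alpha_i^\infty(x)=\alpha_j^\infty(\alpha_i^j(x))=0$, so $x\in K_i$. Finally the canonical injections $H_i\to G$ are compatible with the $\bar\alpha_i^j$, their images $\alpha_i^\infty(G_i)$ exhaust $G$, and $G^+=\bigcup_i\alpha_i^\infty(G_i^+)$, so $G\cong\varinjlim(H_i,\bar\alpha_i^j)$ as ordered groups, exhibiting $G$ as ultrasimplicial.

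For (2), given $x,y\in G$ with $nx\le y$ for all $n\in\mathbb N$, I would choose $i$ and lifts $\hat x,\hat y\in G_i$. For each $n$, since $y-nx\ge 0$ in the limit there is $j(n)\ge i$ with $\alpha_i^{j(n)}(\hat y-n\hat x)\ge 0$; reading off the $l$-th coordinate and dividing by the positive scalar $k^{i,j(n)}_l$ yields $n\hat x_m\le\hat y_m$ for every $m\in\operatorname{im}(p_{i,j(n)})$, and in particular for every $m\in S_i$. Since $\ZZ$ is archimedean, letting $n\to\infty$ forces $\hat x_m\le 0$ for all $m\in S_i$; as $\hat x$ agrees modulo $K_i$ with the element supported on $S_i$ carrying those (non-positive) coordinates, this gives $x=\alpha_i^\infty(\hat x)\le 0$. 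The delicate point — and the only place the non-mixing hypothesis is essential in (2) — is that although the witness $j(n)$ varies with $n$, the \emph{fixed} set $S_i$ is contained in all of the images $\operatorname{im}(p_{i,j(n)})$; for a general direct limit of simplicial groups no such coherent subset need exist (the limit can genuinely be non-archimedean), so this is where the argument would break down without the structural hypothesis.
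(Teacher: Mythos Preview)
Your proof is correct and follows essentially the same plan as the paper's. For (1) both arguments quotient by kernels to obtain an injective non-mixing system with the same limit; you quotient by $K_i=\ker\alpha_i^\infty$ (which immediately gives injectivity), while the paper quotients by $\ker\alpha_i^{i+1}$ --- your choice is the cleaner one, since the paper's one-step kernels do not literally yield injective maps without a further telescoping. For (2) the paper first invokes (1) to reduce to injective non-mixing maps, after which ``$z\not\le 0$ in $G_i$ implies $\alpha_i^\infty(z)\not\le 0$'' is immediate and the archimedean conclusion follows in one line; you instead argue directly with the original system, using the stable coordinate set $S_i$ to make the inequalities $n\hat x_m\le\hat y_m$ hold uniformly in $n$ on a fixed set of coordinates. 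The two arguments are equivalent at heart --- your $S_i$ is exactly the set of coordinates surviving in the injective quotient the paper passes to --- so this is a minor reorganization rather than a genuinely different route.
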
\label{ultra-arch}
\begin{proof} (1)  Suppose $G$ is the limit of a sequence $(G_i, \alpha_i^j)$ of simplicial groups and positive non-mixing homomorphisms. Restricting each $\alpha_i^{i+1}$ to $G_i/\text{ker}(\alpha_i^{i+1})$ and projecting its image to $G_{i+1}/\text{ker}(\alpha_{i+1}^{i+2})$ yields a sequence with injective maps with the same limit $G$.

(2) By (1) we can assume $\alpha_i^j$ are injective positive non-mixing homomorphisms.
If $z\in G_i$ and $z\not \leq 0$ then $\alpha_i^j(z)\not \leq 0$ for any $j\geq i$ and therefore $\alpha_i^\infty(z)\not \leq 0$. Suppose $x,y\in G$ are such that $nx\leq y$ for every $n\in \mathbb N$, and $x_i, y_i$ are such that $\alpha_i^\infty(x_i)=x$ and $\alpha_i^\infty(y_i)=y$. Then $nx- y\leq 0$ implies that $nx_i-y_i\leq 0$ for every $n$. Since $G_i$ is archimedean we have $x_i \leq 0$ and therefore $x\leq 0$. 
\end{proof}
Next we will show that for dimension groups in $\overrightarrow{\mathfrak D}$ changing the order-unit results in $\sim_\QQ$-equivalent (Definition \ref{eq-def}) dimension groups. First we need the following fairly trivial lemma.
\begin{lemma}\label{lemma1}
Suppose $\alpha: \ZZ^{r} \to \ZZ^{s}$ is a   positive non-mixing homomorphism of simplicial groups and  $\gamma: \ZZ^{r} \to  \ZZ^r$ is a homomorphism defined by $\gamma(x_1, \dots,x_r)= (l_1x_1, \dots, l_rx_r)$  for positive integers $l_1,\dots l_r$. Then there are natural numbers $n$,  positive integers $l'_1,\dots, l'_s$ such that if $\eta: \ZZ^{s} \to \ZZ^{s} $ is defined by $\eta(x_1, \dots,x_s)= (l'_1x_1, \dots, l'_sx_s)$, then $\eta\circ \alpha\circ\gamma= n\alpha$, i.e., the following diagram commutes.
 \begin{equation*} \label{diag1}
\begin{tikzcd}
 \ZZ^r \arrow[d, "\gamma"]\arrow[r,"n\alpha"]& \ZZ^s\\
\ZZ^r \arrow[r, "\alpha"] & \ZZ^s \arrow[u, dashed, "\eta"]  
\end{tikzcd}
\end{equation*}
\end{lemma}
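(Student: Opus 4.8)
The plan is to carry out the computation directly: unwind the definitions of $\alpha$, $\gamma$ and $\eta$, compose them, and match coefficients in each coordinate to see what $n$ and the $l'_j$ must be.

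Write $\alpha(x_1, \dots, x_r) = (k_1 x_{i_1}, \dots, k_s x_{i_s})$ with each $k_j$ a positive integer and each $i_j \in \{1, \dots, r\}$, as in the definition of a positive non-mixing homomorphism. Applying $\gamma$ first replaces each $x_t$ by $l_t x_t$, so the $j$-th coordinate of $\alpha(\gamma(x))$ is $k_j l_{i_j} x_{i_j}$, and hence the $j$-th coordinate of $\eta(\alpha(\gamma(x)))$ is $l'_j k_j l_{i_j} x_{i_j}$. On the other hand the $j$-th coordinate of $n\alpha(x)$ is $n k_j x_{i_j}$. Since the $k_j$ are nonzero, the desired identity $\eta \circ \alpha \circ \gamma = n\alpha$ is therefore equivalent to the system of equations $l'_j\, l_{i_j} = n$ for $j = 1, \dots, s$.

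It now suffices to choose $n$ to be any common multiple of $l_1, \dots, l_r$, for instance $n = l_1 l_2 \cdots l_r$, which is a positive integer, and to set $l'_j := n / l_{i_j}$. Because $l_{i_j}$ is one of the factors of $n$, each $l'_j$ is a positive integer, and by construction $l'_j l_{i_j} = n$, so the diagram commutes. The only point worth noting is that $\alpha$ need not be injective or surjective, so some indices of $\{1, \dots, r\}$ may be repeated among the $i_j$ and others may be absent; this causes no difficulty, since $l'_j$ depends only on the index $i_j$ actually selected in the $j$-th output coordinate, and every $l_t$ divides $n$. There is essentially no obstacle here — this is the \emph{fairly trivial lemma} advertised before the statement; the real work in the surrounding argument will be to combine it with Proposition \ref{arch} to absorb changes of order-unit after tensoring with $\QQ$.
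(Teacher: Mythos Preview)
Your proof is correct and follows the same direct coordinate-by-coordinate computation as the paper; your choice $n = l_1 \cdots l_r$ is in fact cleaner than the paper's $n = k_1 \cdots k_s\, l_{i_1} \cdots l_{i_s}$, since the $k_j$ cancel from both sides and play no role in the divisibility needed. One small correction to your closing remark: the lemma is used in Proposition~\ref{thm-uv} (absorbing changes of order-unit into $\sim_{\QQ}$), not in conjunction with Proposition~\ref{arch}.
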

\begin{proof}
Suppose $\alpha(x_1, \dots, x_r) = (k_1x_{i_1}, \dots, k_sx_{i_s})$ for $i_j\in\{1, \dots r\}$.
 Let $n = k_1 \dots k_s l_{i_1} \dots l_{i_s}$
  and define  $\eta(x_1, \dots, x_s) = (\frac{n}{k_1l_{i_1}}x_1, \dots, \frac{n}{k_sl_{i_s}}x_s)$. Then $\eta \circ \alpha \circ \gamma = n \alpha$.
\end{proof}

\begin{proposition}

 \label{thm-uv}
If $G\in \overrightarrow{\mathfrak D}$ and $u,w$ are order-units of $G$ then $\la G, u\ra\sim_{\QQ} \la G, w\ra$.
\end{proposition}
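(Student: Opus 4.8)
The plan is to use the non-mixing hypothesis to reduce to the case where $u$ and $w$ are represented by \emph{strictly positive} integer vectors at one common finite stage of a simplicial presentation of $G$, and then, after tensoring with $\QQ$ (which turns the simplicial groups $\ZZ^{r_i}$ into the rational groups $\QQ^{r_i}$ with coordinatewise order, where diagonal rescalings by arbitrary positive rationals become available), to write down the required order isomorphism $\la G,u\ra\otimes\QQ\cong\la G,w\ra\otimes\QQ$ directly, as a direct limit of such diagonal rescalings.

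First I would write $G=\varinjlim(\ZZ^{r_i},\alpha_i^j)$ with the $\alpha_i^j$ positive non-mixing, choose a stage $p$ and vectors $\hat u,\hat w\in\ZZ^{r_p}$ with $\alpha_p^\infty(\hat u)=u$ and $\alpha_p^\infty(\hat w)=w$, and, since $u,w\geq 0$, enlarge $p$ so that $\hat u,\hat w\geq 0$ coordinatewise. The crucial step is the further reduction to the case that every coordinate of $\hat u$ and of $\hat w$ is strictly positive (so that $\hat u,\hat w$ are order-units of $\ZZ^{r_p}$). Here the non-mixing hypothesis does the work: if $(\hat u)_l=0$, then since $u$ is an order-unit there is $M\in\mathbb N$ with $\alpha_p^\infty(e_l)\leq M\,\alpha_p^\infty(\hat u)$ (where $e_l\in\ZZ^{r_p}$ is the $l$-th standard generator), so $\alpha_p^q(M\hat u-e_l)\geq 0$ for some $q\geq p$ by the usual description of the order on a direct limit; writing $\alpha_p^q(x_1,\dots,x_{r_p})=(k_1x_{m_1},\dots,k_{r_q}x_{m_{r_q}})$, a coordinate $t$ with $m_t=l$ would force $-k_t\geq 0$, which is absurd, so no coordinate of $\alpha_p^q$ --- hence none of $\alpha_p^\infty$ --- reads the $l$-th coordinate, and therefore the $l$-th coordinate of $\hat u$ may be replaced by $1$ without changing its image $u$. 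Doing this for all vanishing coordinates of $\hat u$ and of $\hat w$ completes the reduction. (It is precisely this step that fails for general dimension groups, where changing the order-unit does change the $\sim_\QQ$-class; the Bratteli-diagram condition is exactly what removes the obstruction.)

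Now assume, relabelling $p=1$, that $\hat u,\hat w$ are strictly positive. Tensoring the system with $\QQ$ and using that $\otimes\QQ$ commutes with direct limits and that $\ZZ^{r_i}\otimes\QQ$ is $\QQ^{r_i}$ with the coordinatewise order, we obtain $G\otimes\QQ=\varinjlim(\QQ^{r_i},\beta_i^j)$, where $\beta_i^j$ is given over $\QQ$ by the same formula as $\alpha_i^j$ and $\beta_i^\infty\colon\QQ^{r_i}\to G\otimes\QQ$ are the canonical maps; under this identification $u\otimes 1=\beta_1^\infty(\hat u)$ and $w\otimes 1=\beta_1^\infty(\hat w)$. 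Define diagonal automorphisms $D_i$ of $\QQ^{r_i}$ by letting the $t$-th entry of $D_1$ be $(\hat w)_t/(\hat u)_t>0$, so that $D_1(\hat u)=\hat w$, and letting the $t$-th entry of $D_{i+1}$ be the $m_t$-th entry of $D_i$ (with $\beta_i^{i+1}(x)_t=k_tx_{m_t}$). A one-line check gives $D_{i+1}\circ\beta_i^{i+1}=\beta_i^{i+1}\circ D_i$ for all $i$, so the $D_i$ pass to an ordered-group automorphism $D$ of $G\otimes\QQ$ (each $D_i$ is a positive isomorphism with positive inverse, and the squares commute), and $D(u\otimes 1)=\beta_1^\infty(D_1(\hat u))=\beta_1^\infty(\hat w)=w\otimes 1$. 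Thus $D$ is an isomorphism $\la G,u\ra\otimes\QQ\cong\la G,w\ra\otimes\QQ$, that is, $\la G,u\ra\sim_\QQ\la G,w\ra$.

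The only genuinely delicate point is the strict-positivity reduction in the second paragraph; everything else is bookkeeping. If one prefers to argue entirely over $\ZZ$, Lemma~\ref{lemma1} is the natural substitute for the rational rescaling above: one propagates the diagonal maps over $\ZZ$, clearing at each stage the denominators that arise, at the cost of multiplying the order-units by integers $n_i$; the two limits one then obtains are $\la G,u\ra\otimes\QQ_m$ and $\la G,w\ra\otimes\QQ_n$ for supernatural numbers $m,n$ built from the $n_i$, and $\la G,u\ra\otimes\QQ_m\cong\la G,w\ra\otimes\QQ_n$ is equivalent to $\la G,u\ra\sim_\QQ\la G,w\ra$ by the remark after Definition~\ref{eq-def}.
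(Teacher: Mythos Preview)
Your proof is correct, and in fact handles one point more carefully than the paper does. Both arguments rest on the same observation --- that positive non-mixing maps commute (up to scalars) with coordinatewise diagonal rescalings --- but the executions differ.

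The paper stays over $\ZZ$ throughout: it starts from a diagonal map $\gamma_1\colon\la G_1,u_1\ra\to\la G_1,m_1w_1\ra$, then uses Lemma~\ref{lemma1} recursively to build a zigzag intertwining between the two systems, picking up integer factors $n_i,m_i$ at each step; the limit is an isomorphism $\la G,u\ra\otimes\QQ_n\cong\la G,w\ra\otimes\QQ_m$ for the supernatural numbers $n,m$ associated to $\{n_1\cdots n_i\}$ and $\{m_1\cdots m_i\}$. You instead tensor with $\QQ$ first, so that the diagonal rescalings $D_i$ become genuine order-automorphisms of $\QQ^{r_i}$ that commute \emph{on the nose} with the connecting maps; the induced map on the limit is then an automorphism of $G\otimes\QQ$ carrying $u\otimes 1$ to $w\otimes 1$. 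This is cleaner --- no zigzag, no appeal to Lemma~\ref{lemma1} --- and gives $\la G,u\ra\otimes\QQ\cong\la G,w\ra\otimes\QQ$ in one stroke. Your final paragraph is essentially the paper's argument.

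Your strict-positivity reduction is also worth noting: the paper asserts that the lifts $u_1,w_1\in G_1$ are order-units ``because $\alpha_1^i$ is order-unit preserving'', but that only shows order-units go to order-units, not that an arbitrary lift of an order-unit is itself one. Your argument --- that a zero coordinate of $\hat u$ is never read by any $\alpha_p^q$ (else $\alpha_p^q(M\hat u-e_l)\geq 0$ would force a negative coefficient), and can therefore be replaced by $1$ without changing the image --- is precisely what is needed to justify this step, and genuinely uses the non-mixing hypothesis.
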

\begin{proof}
Suppose $G = \varinjlim (G_i,\alpha_i^j)$ where $(G_i, \alpha_i^j)$ is a direct sequence of simplicial groups and positive non-mixing homomorphisms.  Without loss of generality assume that there are $u_1, w_1\in G_1$ such that $\alpha_1^\infty(u_1) = u$ and $\alpha_1^\infty(w_1) = w$. Set $u_i = \alpha_1^i(u_1)$ and $w_i =\alpha_1^i(w_1)$. Since each $u_i$ and $w_i$ are order-units of $G_i$ (because $\alpha_1^i$ is order-unit preserving) the convex subgroups (ideals) of $G_i$ generated by $u_i$ and $w_i$ are all of $G_i$. Therefore we have $\la G, u\ra = \varinjlim (G_i, u_i, \alpha_i^j)$ and $\la G, w\ra = \varinjlim (G_i, w_i, \alpha_i^j)$; cf. \cite[Corollary 3.18]{Goodearl-book}. 


Suppose $G_1=\ZZ^{r_1}$ and let $m_1 = u_{11} \dots u_{1r_{1}}$ where $u_1=(u_{11}, \dots, u_{1r_{1}})$. We can find $\gamma_1: \la G_1, u_1 \ra \to \la G_1,m_1w_1 \ra$ given by $(x_1 \dots, x_{r_1}) \to (k_1x_1, \dots, k_{r_1}x_{r_1})$ where $k_i = \frac{u_{11}\dots u_{1r_1}}{u_i}w_i$.
Starting wit $\gamma_1$ use Lemma \ref{lemma1} recursively to find sequences of natural numbers $\{n_i\}$ and $\{m_i\}$ and normalized positive homomorphism $\gamma_i$ and $\eta_i$ such that the following diagram commutes 
 \begin{equation*} \label{diag2}
\begin{tikzcd}
\la G_1, u_1 \ra \arrow[r,"n_1\alpha_1^2"]\arrow[d,"\gamma_1"] & \la G_2, n_1u_2 \ra \arrow[r,"\alpha_2^3"]& \la G_3, n_1u_3 \ra \arrow[r,"n_2\alpha_3^4"]\arrow[d,"\gamma_2"]& \la G_4, n_2n_1 u_4 \ra & \dots  \\
\la G_1, m_1w_1 \ra \arrow[r,"\alpha_1^2"]& \la G_2, m_1w_2 \ra \arrow[r,"m_2\alpha_2^3"]\arrow[u, "\eta_1"]& \la G_3, m_2m_1w_3 \ra \arrow[r,"\alpha_3^4"] & \la G_4, m_2m_1 w_4 \arrow[u, "\eta_2"] \ra& \dots
\end{tikzcd}
\end{equation*}
This intertwining provides an isomorphism between $\la G,u \ra \otimes \QQ_n$ and $\la G,w \ra \otimes \QQ_m$, where $n$ and $m$ are the supernatural numbers associated to the sequences $\{n_i\dots n_1\}_{i\in \mathbb N}$ and $\{m_i\dots m_1\}_{i\in \mathbb N}$, respectively.
\end{proof}

\begin{corollary} \label{coro-wz}
Suppose $G,H \in \overrightarrow{\mathfrak{D}}$ and $\la G, w\ra$ and $\la H, z\ra$ are isomorphic for some order-units $w,z$. Then for every $u$ order-unit of $G$ and $v$ order-unit of $H$ we have $\la G, u\ra\sim_{\QQ} \la H, v\ra$.
\end{corollary}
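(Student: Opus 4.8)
The plan is to deduce the corollary as a short chaining argument that combines the given isomorphism with two applications of Proposition \ref{thm-uv}, once it is observed that $\sim_{\QQ}$ is transitive. The first thing I would record is that $\sim_{\QQ}$ is in fact an equivalence relation, and in particular transitive: this is immediate from Definition \ref{eq-def}, because $\sim_{\QQ}$ is defined through the isomorphism relation $\cong$ of ordered abelian groups with order-units, which is itself reflexive, symmetric and transitive; no use of $\QQ \otimes \QQ \cong \QQ$ is even needed for this.

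Next, I would apply Proposition \ref{thm-uv} twice. Since $u$ and $w$ are both order-units of $G \in \overrightarrow{\mathfrak D}$, the proposition gives $\la G, u \ra \sim_{\QQ} \la G, w \ra$; since $z$ and $v$ are both order-units of $H \in \overrightarrow{\mathfrak D}$, it gives $\la H, z \ra \sim_{\QQ} \la H, v \ra$. Finally, to bring in the hypothesis, I would apply the functor $-\otimes \QQ$ to the isomorphism $\la G, w \ra \cong \la H, z \ra$: tensor products of order-unit preserving isomorphisms are order-unit preserving isomorphisms (as recalled in the Preliminaries from \cite{G-H}), so $\la G, w \ra \otimes \QQ \cong \la H, z \ra \otimes \QQ$, that is, $\la G, w \ra \sim_{\QQ} \la H, z \ra$. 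Chaining the three relations
$$\la G, u \ra \sim_{\QQ} \la G, w \ra \sim_{\QQ} \la H, z \ra \sim_{\QQ} \la H, v \ra$$
and invoking transitivity yields $\la G, u \ra \sim_{\QQ} \la H, v \ra$, which is the assertion.

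Since each step is a direct appeal to an already-established fact, I do not anticipate a genuine obstacle; the only points that warrant a sentence of care are the observation that $\sim_{\QQ}$ is transitive and the functoriality of $-\otimes\QQ$ on isomorphisms, both of which are formal. If one prefers to work with the ``$\QQ_n$-version'' of Proposition \ref{thm-uv} (which is what its proof literally produces), the same argument goes through verbatim after using the ``Equivalently'' reformulation following Definition \ref{eq-def}, at the cost of invoking $\QQ_n \otimes \QQ \cong \QQ$; citing Proposition \ref{thm-uv} in its stated $\sim_{\QQ}$-form avoids even that.
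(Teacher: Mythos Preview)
Your proposal is correct and follows exactly the paper's approach: apply Proposition \ref{thm-uv} on each side and chain through the given isomorphism, using transitivity of $\sim_{\QQ}$. The paper's proof is the same one-line chain $\la G, u \ra \sim_{\QQ} \la G, w \ra \cong \la H, z \ra \sim_{\QQ} \la H, v \ra$, just with the formal remarks on transitivity and functoriality left implicit.
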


\begin{proof}
By Proposition \ref{thm-uv} and our assumption we have
$\la G, u \ra \sim_{\QQ} \la G, w \ra \cong \la H, z \ra \sim_{\QQ} \la H, v \ra$.
\end{proof}

\subsection{State spaces}
Let $\la G,u \ra$ be an ordered abelian group with order-unit $u$.
 The set $S( G,u )$ of all states on $\la G,u \ra$, called the \emph{state space} of $ \la G,u \ra$, when equipped with the relative topology endowed from $\mathbb R^G$, is a compact convex subset of the locally convex topological vector space $\mathbb R^G$. Given a normalized positive homomorphism $\alpha: \la G, u \ra \to \la H, v \ra$, one associates an affine and continuous map $S(\alpha): S(  H,v  ) \to S(  G,u )$ defined by $S(\alpha)(s) =s \circ \alpha$, for every $s\in S( H, v )$. In fact, $S$ is a contravariant and continuous functor from the category of ordered abelian groups with order-unit into the category of compact convex sets.
If $\la G, u \ra$ is a dimension group with an order-unit, then $S( G,u )$ is a  Choquet simplex (\cite[Theorem 10.17]{Goodearl-book}). Conversely, a Choquet simplex is affinely homeomorphic to the state space of a (countable) dimension group with an order-unit (\cite[Corollary 14.9]{Goodearl-book}). 

If $G$ is a ordered abelian group and $u_1, u_2 \in G^+$ are order-units of $G$, then the map $\phi: S(G, u_1) \to S(G, u_2)$ defined by $\phi(s)(x) = \frac{1}{s(u_2)}s(x)$ is a (not necessarily affine) homeomorphism. In fact, there are dimension groups $G$ with order-units $u$ and
$v$ such that $S(G,u)$ is not affinely homeomorphic to $S(G,v)$ (see \cite[Example 6.18]{Goodearl-book}). 
\begin{proposition}\label{prop-S-Q}
For every dimension group with order-unit $\la G, u \ra$ and supernatural number $n$ we have $S(G, u) \cong S(\la G, u\ra \otimes \mathbb Q_n)$. If $\la G ,u \ra \sim_{\QQ} \la H, v \ra$, then $S(G,u)\cong S(H,v)$.
\end{proposition}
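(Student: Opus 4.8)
The plan is to establish the first assertion, $S(G,u)\cong S(\la G,u\ra\otimes\QQ_n)$, directly from the functorial description of state spaces, and then deduce the second assertion as an immediate formal consequence. Write $\la G,u\ra=\varinjlim(\ZZ^{r_i},u_i,\alpha_i^j)$ and fix a sequence $\{n_i\}$ associated to $n$ with $n_i\mid n_{i+1}$. By the formula recalled in the Preliminaries, $\la G,u\ra\otimes\QQ_n=\varinjlim(\ZZ^{r_i},n_iu_i,\tfrac{n_j}{n_i}\alpha_i^j)$. Since the state-space functor $S$ is continuous, $S(\la G,u\ra\otimes\QQ_n)=\varprojlim S(\ZZ^{r_i},n_iu_i)$ with bonding maps $S(\tfrac{n_j}{n_i}\alpha_i^j)$, and likewise $S(G,u)=\varprojlim S(\ZZ^{r_i},u_i)$ with bonding maps $S(\alpha_i^j)$.

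First I would observe that for a single simplicial group, rescaling the order-unit does not change the state space on the nose up to a canonical affine homeomorphism: the map $\ZZ^r\to\ZZ^r$, $x\mapsto mx$, is a positive isomorphism of ordered groups carrying the order-unit $u_i$ to $mu_i$, hence induces an affine homeomorphism $\theta_i\colon S(\ZZ^{r_i},n_iu_i)\xrightarrow{\cong}S(\ZZ^{r_i},u_i)$ given by $\theta_i(s)(x)=s(x)$ — indeed a state on $\la\ZZ^{r_i},n_iu_i\ra$ is the same function on $\ZZ^{r_i}$ as a state on $\la\ZZ^{r_i},u_i\ra$ after dividing through, but the cleanest formulation is: precomposition with the isomorphism $x\mapsto n_ix$ sends $S(\ZZ^{r_i},u_i)$ onto $S(\ZZ^{r_i},n_iu_i)$. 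The key computation is then to check that these $\theta_i$ intertwine the two inverse systems, i.e. that the square relating $S(\alpha_i^j)$ and $S(\tfrac{n_j}{n_i}\alpha_i^j)$ commutes: for a state $s$ on $\la\ZZ^{r_j},u_j\ra$ and $x\in\ZZ^{r_i}$ one has, up to the correct normalization factor, $s\bigl(\tfrac{n_j}{n_i}\alpha_i^j(x)\bigr)=\tfrac{n_j}{n_i}s(\alpha_i^j(x))$, and the scalars are exactly absorbed by the identifications $\theta_i,\theta_j$ because $n_j/n_i$ is the ratio by which the order-units were rescaled. Granting this, the family $\{\theta_i\}$ yields an isomorphism of inverse systems and hence, passing to the inverse limit, an affine homeomorphism $S(\la G,u\ra\otimes\QQ_n)\cong S(G,u)$.

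The main obstacle — really the only point requiring care — is bookkeeping of normalization constants: a ``state'' must send the chosen order-unit to $1$, so when one rescales the order-unit from $u_i$ to $n_iu_i$ the corresponding affine functions on the group get divided by $n_i$, and one must verify that these factors are globally consistent along the inverse system rather than merely locally. This is precisely why the divisibility $n_i\mid n_{i+1}$ and the use of the \emph{same} associated sequence $\{n_i\}$ throughout matters. Once the diagram is seen to commute, no analytic input is needed beyond the already-cited continuity of $S$ and the fact (\cite[Theorem 10.17]{Goodearl-book}) that these limits are Choquet simplices, which guarantees the inverse limits are the expected compact convex sets.

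For the second assertion: if $\la G,u\ra\sim_\QQ\la H,v\ra$ then by Definition \ref{eq-def} there are supernatural numbers $m,n$ with $\la G,u\ra\otimes\QQ_m\cong\la H,v\ra\otimes\QQ_n$ as ordered groups with order-unit, so their state spaces agree; combining this with the first assertion applied on each side gives
$$
S(G,u)\cong S(\la G,u\ra\otimes\QQ_m)\cong S(\la H,v\ra\otimes\QQ_n)\cong S(H,v),
$$
as desired.
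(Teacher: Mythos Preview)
Your proposal is correct and follows essentially the same route as the paper: both arguments use the commuting ladder between the direct systems $(\la G_i,u_i\ra,\alpha_i^j)$ and $(\la G_i,n_iu_i\ra,\tfrac{n_j}{n_i}\alpha_i^j)$ with vertical maps given by multiplication by $n_i$, then apply the state functor $S$ to obtain an isomorphism of inverse systems and hence of their limits. Your phrasing of the direction of the induced map $\theta_i$ wavers momentarily (precomposition with $x\mapsto n_ix$, viewed as a normalized map $\la\ZZ^{r_i},u_i\ra\to\la\ZZ^{r_i},n_iu_i\ra$, sends $S(\ZZ^{r_i},n_iu_i)$ to $S(\ZZ^{r_i},u_i)$, not the other way), but you flag exactly this bookkeeping as the only delicate point and the substance is right.
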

\begin{proof}
If $\la G, u \ra = \varinjlim (G_i, u_i, \alpha_i^j)$ where each $\la G_i, u_i\ra$ is a simplicial group with order-unit, then for any supernatural number $n$ and an associated sequence $\{n_i\}$, we have $\la G, u \ra\otimes \mathbb Q_n = \varinjlim (G_i, n_iu_i, \frac{n_j}{n_i}\alpha_i^j)$. The following diagram commutes.
 \begin{equation*} \label{diag2}
\begin{tikzcd}
\la G_1, u_1 \ra \arrow[r,"\alpha_1^2"]\arrow[d,"n_1"] & \la G_2, u_2 \ra \arrow[r,"\alpha_2^3"]\arrow[d,"n_2"]& \la G_3, u_3 \ra \arrow[r,"\alpha_3^4"]\arrow[d,"n_3"] & \dots & \la G,u\ra\\
\la G_1, n_1u_1 \ra \arrow[r,"\frac{n_2}{n_1}\alpha_1^2"]& \la G_2, n_2u_2 \ra \arrow[r,"\frac{n_3}{n_2}\alpha_2^3"]& \la G_3, n_3u_3 \ra \arrow[r,"\frac{n_4}{n_3}\alpha_3^4"] & \dots & \la G, u \ra\otimes \mathbb Q_n
\end{tikzcd}
\end{equation*}
After applying the state functor $S$ to the above diagram we get a commuting diagram where the inverse limit of the first row is $S(G,u)$ and the inverse limit of the second row is $S(\la G, u \ra\otimes \mathbb Q_n)$ (cf. \cite[Proposition 6.14]{Goodearl-book}). Clearly $S(n_i): S(G_i, n_iu_i) \to S(G_i, u_i)$ is an affine homeomorphism and therefore  $S(G, u) \cong S(\la G, u\ra \otimes \mathbb Q_n)$. The second statement follows immediately.
\end{proof}
\begin{corollary}
Suppose $u$ and $v$ are order-units of a dimension group $G\in \overrightarrow{\mathfrak D}$. Then $S(G,u)$ is affinely homeomorphic to $S(G,v)$.
\end{corollary}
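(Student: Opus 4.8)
The plan is to simply chain together the two main results that precede this corollary. The key observation is that the statement only concerns a \emph{single} dimension group $G$ with two different choices of order-unit, which is exactly the situation handled by Proposition \ref{thm-uv}. So the first step is to invoke Proposition \ref{thm-uv} to conclude that $\la G, u\ra \sim_{\QQ} \la G, v\ra$, i.e. $\la G, u\ra \otimes \QQ \cong \la G, v\ra \otimes \QQ$ as ordered abelian groups with order-units.

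The second and final step is to apply Proposition \ref{prop-S-Q}: its second assertion says precisely that $\la G, u\ra \sim_{\QQ} \la H, v\ra$ implies $S(G,u) \cong S(H,v)$ as compact convex sets (affinely homeomorphic). Taking $H = G$ gives $S(G,u) \cong S(G,v)$, which is the claim.

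I do not expect any obstacle here — both ingredients are already established, and the corollary is just their composition. The only thing to be careful about is bookkeeping: Proposition \ref{thm-uv} requires $G \in \overrightarrow{\mathfrak D}$ (which is part of the hypothesis) so that the relevant intertwining with non-mixing connecting maps is available, and Proposition \ref{prop-S-Q} requires $G$ to be a dimension group with order-unit (also part of the hypothesis). One should perhaps remark, for contrast with the general situation recalled before Proposition \ref{prop-S-Q} (where $S(G,u)$ and $S(G,v)$ can fail to be affinely homeomorphic, cf. \cite[Example 6.18]{Goodearl-book}), that membership in $\overrightarrow{\mathfrak D}$ is exactly what rescues affineness: the non-affine rescaling homeomorphism $S(G,u)\to S(G,v)$ coming from $\phi(s)(x)=\frac{1}{s(v)}s(x)$ is upgraded, after tensoring with $\QQ$, to an honest affine homeomorphism.

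\begin{proof}
By Proposition \ref{thm-uv}, since $G\in \overrightarrow{\mathfrak D}$, we have $\la G, u\ra \sim_{\QQ} \la G, v\ra$. By the second statement of Proposition \ref{prop-S-Q}, it follows that $S(G,u)$ is affinely homeomorphic to $S(G,v)$.
\end{proof}
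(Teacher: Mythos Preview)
Your proof is correct and follows exactly the same route as the paper's own proof, which simply says the result follows from Proposition \ref{thm-uv} and Proposition \ref{prop-S-Q}. Your version is just a slightly more detailed unpacking of that same two-step chain.
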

\begin{proof}
Follows from Proposition \ref{thm-uv} and Proposition \ref{prop-S-Q}.
\end{proof}
In general $S(G,u) \cong S(H,v)$ does not imply $\la G,u\ra \sim_{\QQ} \la H,v \ra$, not even in the finite-dimensional case. For example, simply let $\la G, u \ra$ denote the simplicial group $\la\ZZ^2, 1\ra$ and $\la H, v \ra$ denote the group $\QQ^2$ with the strict ordering $\ll$ and order-unit 1. Note that $\la H, v \ra$ is a dimension group since it is a directed and unperforated interpolation group. Clearly $S(G,u) \cong S(H,v)\cong \Delta_1$ but $\la G,u\ra \not\sim_{\QQ} \la H,v \ra$, since the left side is archimedean while the right side is not, and tensoring with $\QQ$ does not change the state of being archimedean (Proposition \ref{arch}). However, we will show that for dimension groups $G,H$ in $\overrightarrow{\mathfrak D}$ we have $S(G,u) \cong S(H,v)$ implies that $\la G,u\ra \sim_{\QQ} \la H,v \ra$. 

 Given a compact convex set $K$ in a linear topological space $\text{Aff}(K)$ denotes the collection of all affine continuous real-valued functions on $K$. Let $ 1_K$ denote the constant function with value 1 on $K$. Equipped with the pointwise ordering, $\la \text{Aff}(K), 1_k\ra$ is an ordered real vector space with order-unit. With the supremum norm $\Aff(K)$, as a
norm-closed subspace of $C(K,\mathbb R)$, is an ``ordered real Banach space". A compact convex $K$ is a Choquet simplex if and only if $\Aff(K)$ is a (uncountable) dimension group (cf. \cite[Theorem 11.4]{Goodearl-book}).  

For any affine continuous
map $\nu : K \to L$ between compact convex sets, the map $A(\nu): \la \text{Aff}(L), 1_L\ra \to \la\text{Aff}(K), 1_K \ra$ defined by $A(\nu)(f) = f\circ \nu$, for $f\in \text{Aff}(L)$ is an order-unit preserving positive homomorphism. 

\subsection{The main result}
Suppose $\la G, u \ra$ is an ordered abelian group with order-unit. Then there is a natural normalized positive homomorphism $\vphi: \la G, u \ra \to \la\Aff(S(G,u)), 1 \ra$ defined by $x\to \widehat x$ where $\widehat x(s) = s(x)$ for any state $s$. The map $\vphi$ is called the \emph{natural affine representation} of $\la G,u \ra$ and it is an embedding if and only if $G$ is archimedean (cf. \cite[Theorem 7.7]{Goodearl-book}). 

Note that $\Aff(S(\ZZ^r,u))$ is isomorphic to $\la\mathbb R^r, 1\ra$ as ordered real Banach space. Any normalized positive homomorphism $\beta:\la\mathbb R^r, 1\ra \to \la\mathbb R^s, 1\ra$ is of the form
$$
\beta(x_1, \dots, x_r)= (\sum_i^r a_{i1}x_i, \dots , \sum_i^r a_{is}x_i)
$$
where $a_{ij}\in \mathbb R$ and $\sum_i^r a_{ij}=1$ for every $1 \leq j\leq s$. We say $\beta$ is \emph{rational} if each $a_{ij}$ is a rational number.

\begin{theorem} \label{thm-D}
Suppose $G,H$ are in $\overrightarrow{\mathfrak D}$ and $u\in G$ and $v\in H$ are order-units. Then $S(G,u)\cong S(H,v)$ if and only if $\la G, u \ra \sim_{\QQ}\la H, v \ra $. 
\end{theorem}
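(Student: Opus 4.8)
The plan is to prove the nontrivial direction: assuming $S(G,u) \cong S(H,v)$, produce supernatural numbers $n, m$ with $\la G, u\ra \otimes \QQ_m \cong \la H, v\ra \otimes \QQ_n$. The converse is already handled by Proposition \ref{prop-S-Q}. The first step is to reduce both sides to a convenient normal form: write $G = \varinjlim(\ZZ^{r_i}, u_i, \alpha_i^j)$ and $H = \varinjlim(\ZZ^{s_i}, v_i, \beta_i^j)$ with the $\alpha_i^j, \beta_i^j$ injective positive non-mixing homomorphisms (using Proposition \ref{ultra-arch}(1)), and apply the affine representation functor. Since $G$ and $H$ are archimedean (Proposition \ref{ultra-arch}(2)), the natural affine representations $\vphi_G : \la G,u\ra \hookr \la \Aff(S(G,u)),1\ra$ and $\vphi_H: \la H,v\ra \hookr \la \Aff(S(H,v)),1\ra$ are embeddings, so $G$ and $H$ can be regarded as ordered subgroups of $\Aff(S(G,u)) \cong \Aff(S(H,v))$ (the isomorphism coming from the affine homeomorphism $S(G,u) \cong S(H,v)$ via the functor $A(-)$). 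Moreover, applying $S$ to the inductive system for $G$ and then $\Aff(-)$, the space $\Aff(S(G,u))$ is the \emph{completion} of the increasing union $\bigcup_i \vphi_G(G_i) \cdot \RR$ where each $\Aff(S(\ZZ^{r_i}, u_i)) \cong \la \RR^{r_i},1\ra$; here the non-mixing hypothesis is crucial because it guarantees the connecting maps on the $\RR^{r_i}$ level, being rescaled coordinate projections, send the standard basis to (positive scalar multiples of) standard basis vectors, so the images stay ``simplicial in shape''.

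The heart of the argument is an intertwining between the two inductive systems after allowing passage to $\QQ$-coefficients. Concretely, I would run a back-and-forth: identify $G_i \subseteq \Aff(K)$ and $H_j \subseteq \Aff(K)$ for $K = S(G,u) = S(H,v)$, and show that for each $i$ there is $j(i) \geq j(i-1)$ and a natural number $N_i$ such that $N_i \cdot \vphi_G(G_i) \subseteq \vphi_H(H_{j(i)})$, and symmetrically each $H_j$ lands (after rescaling) inside some $G_{i(j)}$. The point is that $\vphi_G(G_i)$ is a finitely generated free abelian group sitting inside $\Aff(K)$; its generators $\widehat{e_1}, \dots, \widehat{e_{r_i}}$ (images of the standard basis of $\ZZ^{r_i}$) are specific continuous affine functions on $K$, and because $\bigcup_j \vphi_H(H_j)$ is dense and because each $\widehat{e_k}$ — being the image of a basis vector under a tower of non-mixing maps — is, up to a positive rational scalar, a "partial unit" that can be matched against the corresponding coordinate functions on the $H$-side. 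One clears denominators to get honest group homomorphisms $\ZZ^{r_i} \to \ZZ^{s_{j(i)}}$; they are non-mixing because the matching identifies basis vectors with (rescaled) basis vectors. The resulting maps can be arranged to commute with the connecting maps up to further multiplication by integers — exactly the situation Lemma \ref{lemma1} is designed to absorb — yielding a commuting zig-zag diagram
\begin{equation*}
\begin{tikzcd}[column sep=large]
\la G_{i_1}, N_1 u_{i_1}\ra \arrow[r] \arrow[dr] & \la G_{i_2}, N_2 u_{i_2}\ra \arrow[r] \arrow[dr] & \cdots \\
\la H_{j_1}, M_1 v_{j_1}\ra \arrow[r] \arrow[ur] & \la H_{j_2}, M_2 v_{j_2}\ra \arrow[r] \arrow[ur] & \cdots
\end{tikzcd}
\end{equation*}
whose two rows have limits $\la G,u\ra \otimes \QQ_n$ and $\la H,v\ra \otimes \QQ_m$ for appropriate supernatural numbers $n,m$ (the recipe for reading off $n,m$ from the accumulated integer factors is exactly as in the proof of Proposition \ref{thm-uv}), and hence these limits are isomorphic.

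\textbf{The main obstacle.} The delicate point is the matching step: given a basis vector of $\ZZ^{r_i}$, realized as an affine function $\widehat{e_k} \in \Aff(K)$, I must locate it — up to a positive rational multiple and up to refining the index $j$ — inside the subgroup $\vphi_H(H_j) \subseteq \Aff(K)$, \emph{and} do so in a way that respects positivity and the non-mixing shape (i.e. a single basis vector maps to a single basis vector, not a combination). Density of $\bigcup_j \vphi_H(H_j)$ only gives approximation, not membership, so one needs the rigidity provided by the simplicial structure: the extreme points of the state space $S(\ZZ^{r_i},u_i)$ and their images in $K$ pin down the functions $\widehat{e_k}$ exactly, and the non-mixing connecting maps ensure these extreme points persist coherently down the tower (each extreme point of a later $S(\ZZ^{r_{i'}},u_{i'})$ maps to an extreme point of the earlier one). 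Making this precise — tracking extreme points through both towers, showing the two families of extreme points of $K$ coming from $G$ and from $H$ are cofinally compatible, and converting that compatibility into the integer-matrix intertwining — is where the real work lies; everything else is bookkeeping with Lemma \ref{lemma1} and the tensor-product descriptions of $\otimes\QQ_n$ recalled in the Preliminaries.
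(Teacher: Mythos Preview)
Your setup coincides with the paper's: embed $\la G,u\ra$ and $\la H,v\ra$ into $\Aff(K)$ via the natural affine representation (using archimedeanity), identify $\Aff(S(G,u))\cong\Aff(S(H,v))$, and run an intertwining between the two simplicial towers. Where you diverge is in insisting on \emph{exact} inclusions $N_i\cdot\vphi_G(G_i)\subseteq\vphi_H(H_{j(i)})$ with the cross-maps themselves non-mixing. This is the gap, and the extreme-point argument you sketch does not close it. Non-mixing connecting maps do guarantee that extreme points of $S(G_{i'},u_{i'})$ map to extreme points of $S(G_i,u_i)$, so each tower produces a coherent system of finite subsets of $\partial_e K$; but the two systems coming from $G$ and from $H$ have no reason to be cofinally equal---both are dense in $\partial_e K$, nothing more. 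Consequently a basis function $\widehat e_k$ from the $G$-side need not be a rational multiple of any single $H$-basis function, nor even a rational combination of finitely many of them at any finite stage. Your own diagnosis (``density only gives approximation, not membership'') is exactly right, and the rigidity you invoke does not upgrade approximation to membership.

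The paper avoids this obstacle rather than overcoming it. It builds an \emph{approximate} intertwining in the ordered Banach spaces $A_{k_i}\cong\mathbb R^{k_i}$, $B_{l_i}\cong\mathbb R^{l_i}$ using \emph{rational} normalized positive homomorphisms $\gamma_i,\eta_i$ (not required to be non-mixing) with $\|\eta_i\gamma_i-\widetilde\alpha\|<2^{-i}$, $\|\gamma_{i+1}\eta_i-\widetilde\beta\|<2^{-i}$, and simultaneously enlarges each $\widetilde G_{k_i}$ to a finitely generated subgroup $E_i\subset\QQ^{k_i}$ (and $\widetilde H_{l_i}$ to $F_i\subset\QQ^{l_i}$) absorbing all the images. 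Since $E_i$ is finitely generated of full rank in $\QQ^{k_i}$ it is isomorphic to $\langle\ZZ^{k_i},w_i\rangle$ for some \emph{new} order-unit $w_i$; the approximate intertwining then yields $\la G,w\ra\cong\la H,z\ra$ for order-units $w,z$ that are generally different from $u,v$. Only then does the paper invoke Corollary~\ref{coro-wz} (order-unit independence inside $\overrightarrow{\mathfrak D}$, coming from Proposition~\ref{thm-uv}) to conclude $\la G,u\ra\sim_\QQ\la H,v\ra$. So the decisive device you are missing is: accept approximation, pay for it by losing control of the order-unit, and recover at the end via the order-unit-change proposition.
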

\begin{proof}
We only need to prove the forward direction.  First assume $S(G,u)$ and $S(H,v)$ are finite-dimensional simplices.  If $S(G,u)\cong S(H,v)$ then $\Aff(S(G,u))\cong \Aff(S(H,v))\cong \mathbb R^k$ for some $k$. Since $G$ and $H$ are archimedean (Proposition \ref{ultra-arch}), they are isomorphic to countable subgroups of $\mathbb R^k$ via positive homomorphisms which send $u$ and $v$ to $1$. This together with the fact that every countable additive subgroup of $\mathbb R$ is isomorphic to $\ZZ$ or $\QQ$ implies that $\la G, u \ra \otimes \QQ \cong \la H, v \ra \otimes \QQ$.   

` Now assume  $S(G,u)$ and $S(H,v)$ are infinite-dimensional. Suppose $\la G, u \ra = \varinjlim (G_i, u_i, \alpha_i^j)$ and $\la H, v \ra = \varinjlim (H_i, v_i, \beta_i^j)$, where $\la G_i, u_i\ra$ and $\la H_i,v_i \ra$ are simplicial groups. Without loss of generality we can assume that $G_i \cong H_i \cong \ZZ^i$.
Let $A =\la \Aff(S(G,u)), 1 \ra$, $B = \la \Aff(S(H,v)), 1 \ra$, $A_i = \la \Aff(S(G_i,u_i)), 1 \ra$ and $B_i =\la \Aff(S(H_i,v_i)), 1 \ra$. Let $\widetilde \alpha_i^j: A_i \to A_j$ denote the map $A(s(\alpha_i^j))$ and $\widetilde \beta_i^j: B_i \to B_j$ denote the map $A(s(\beta_i^j))$.
Also let $\vphi: \la G,u\ra \to A$, $\psi: \la H,v\ra \to B$,  $\vphi_i: \la G_i,u_i\ra \to A_i$ and $\psi: \la H_i,v_i\ra \to B_i$ be the respective natural affine representations. Since $\la G,u \ra$ and $\la H, v \ra$ are archimedean, the natural maps $\vphi$ and $\psi$ are embeddings and therefore we may identify $\la G, u \ra$ with $\la \widetilde G, 1_A \ra$ and $\la H, v \ra$ with $\la \widetilde H, 1_B \ra$ as ordered subgroups of $A$ and $B$, respectively. We may also identify each $\la G_i,u_i \ra$ with $\la \widetilde G_i, 1_{A_i} \ra$, its image under the map $\vphi_i$ which sends $(x_1,\dots, x_i)\in\ZZ^i$ to $(x_1/u_{i1}, \dots, x_i/u_{ii})\in \QQ^i$, where $u_i = (u_{i1}, \dots , u_{ii})$, as an ordered subgroup of $A_i$ of rational coordinates. It is easy to check that the diagram
 \begin{equation*} \label{diag2}
\begin{tikzcd}
\la G_1, u_1 \ra \arrow[r,"\alpha_1^{2}"]\arrow[d,"\vphi_1"] & \dots &
\la G_i, u_i \ra \arrow[r,"\alpha_i^{i+1}"]\arrow[d,"\vphi_i"] & \la G_{i+1}, u_{i+1} \ra \arrow[d,"\vphi_{i+1}"] \arrow[r]& \dots &  \la G,u \ra \arrow[d, "\vphi"]\\
\la \widetilde G_1, 1_{A_1}\ra \arrow[r,"\widetilde\alpha_1^{2}"]& \dots &
\la \widetilde G_i, 1_{A_i}\ra \arrow[r,"\widetilde\alpha_i^{i+1}"]& \la \widetilde G_{i+1}, 1_{A_{i+1}} \ra  \arrow[r]&  \dots & \la \widetilde G, 1_A \ra
\end{tikzcd}
\end{equation*}
commutes, i.e. $\widetilde\alpha_i^{i+1}(\widehat x) = \widehat{\alpha_i^{i+1}(x)}$  and $\widetilde\alpha_i^{\infty}(\widehat x) = \widehat{\alpha_i^{\infty}(x)}$ for every $i$ and $x\in G_i$  (this is true for any positive homomorphism and not just the non-mixing ones). Therefore 
$$\la G, u \ra \cong \la \widetilde G, 1_A \ra\cong \varinjlim (\widetilde G_i, 1_{A_i}, \widetilde\alpha_i^j) \leqno(*)$$ 
and similarly, $$\la H,v \ra \cong \la \widetilde H, 1_B \ra\cong \varinjlim (\widetilde H_i, 1_{B_i}, \widetilde\beta_i^j), \leqno(**)$$
where $\widetilde H_i$ is a subgroup of $\QQ^i$ in $B_i$. Since $S(G,u)\cong S(H,v)$, the ordered real Banach spaces $\la A, 1 \ra$ and $\la B, 1 \ra$ are isomorphic, which induces an approximate intertwining between the systems $(A_i, 1_{A_i}, \widetilde\alpha_i^j)$ and $(B_i, 1_{B_i}, \widetilde\beta_i^j)$ as follows. Let $X_i$ denote the natural set of generators of $\widetilde G_i$ and $Y_i$ denote the natural set of generators of $\widetilde H_i$ as subgroups of $\QQ^i$ (note that $|X_i| = |Y_i| = i$). Find increasing sequences of natural numbers $\{k_i\}$ and $\{l_i\}$ and normalized rational positive homomorphisms $\gamma_i: A_{k_i} \to B_{l_i}$ and $\eta_i: B_{l_i} \to A_{k_{i+1}}$ and finitely generated subgroups of rational coordinates $E_i\subseteq \QQ^{k_i}\subseteq A_{k_i}$ and $F_i\subseteq \QQ^{l_i}\subseteq B_{k_i}$ such that for every $i$ we have
\begin{itemize}
    \item $X_{k_i}\subseteq E_i$ and $Y_{l_i}\subseteq F_i$,
    \item $\widetilde \alpha_{k_i}^{k_{i+1}}[E_i] \subseteq E_{i+1}$, $\widetilde \beta_{l_i}^{l_{i+1}}[F_i] \subseteq F_{i+1}$, $\gamma_i[E_i]\subseteq F_{i}$ and $\eta_i[F_i]\subseteq E_{i+1}$,
    \item $\|\eta_i \circ\gamma_i(x) - \alpha_{k_i}^{k_{i+1}}(x)\|<1/2^{i}$  and $\|\gamma_{i+1} \circ\eta_i(y) - \beta_{l_i}^{l_{i+1}}(y)\|<1/2^{i}$ for every $x\in E_i$ and $y\in F_i$.
\end{itemize}
Finding such an approximate intertwining between 
$ (E_i, 1_{A_{k_i}}, \widetilde\alpha_i^j)$ and $(F_i, 1_{B_{l_i}}, \widetilde\beta_i^j)$ is an standard argument and it implies that the two sequences have isomorphic limits. Since $E_i$ and $F_i$ are finitely generated and $X_{k_i}\subseteq E_i$ and $Y_{l_i}\subseteq F_i$, we have $\la E_i, 1_{A_{k_i}}\ra \cong \la \ZZ^{k_i}, w_i \ra$ and $\la F_i, 1_{B_{l_i}}\ra \cong \la \ZZ^{l_i}, z_i \ra$ for some order-units $w_i, z_i$. Note that in particular, $E_i \cong \widetilde G_{k_i}$ and $F_i\cong \widetilde H_{l_i}$. Hence by $(*)$ and $(**)$ for some order-units $w\in G$ and $z\in H$ we have 
$$\la G, w \ra = \varinjlim (E_i, 1_{A_{k_i}}, \widetilde\alpha_{k_i}^{k_j}) = \varinjlim (\ZZ^{k_i}, w_i , \alpha_{k_i}^{k_j})$$ 
and
$$\la H, z \ra = \varinjlim (F_i, 1_{B_{l_i}}, \widetilde\beta_{l_i}^{l_j}) = \varinjlim (\ZZ^{l_i}, z_i, \beta_{l_i}^{l_j}).$$ 
Therefore $\la G, w \ra \cong \la H,z \ra$. By Corollary \ref{coro-wz} we have $\la G, u \ra \sim_{\QQ} \la H,v \ra$.
\end{proof}
Let $\overrightarrow{\mathfrak D}$ also denote the class of unital AF-algebras $\A$ such that the dimension group $K_0(\A)$ belongs to $\overrightarrow{\mathfrak D}$.
\begin{corollary}
Suppose $\A$ and $\B$ are unital AF-algebras in $\overrightarrow{\mathfrak D}$. Then $T(A)\cong T(B)$ if and only if $\A \otimes \mathcal Q\cong \B\otimes \mathcal Q$.
\end{corollary}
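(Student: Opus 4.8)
The plan is to reduce the statement about AF-algebras to the already-proved Theorem~\ref{thm-D} about dimension groups, using the standard dictionary between unital AF-algebras and dimension groups with order-unit provided by Elliott's classification. First I would recall that for a unital AF-algebra $\A$, Elliott's theorem gives a complete isomorphism invariant $\la K_0(\A), K_0(\A)^+, [1_\A]_0\ra$, and that $\A\mapsto K_0(\A)$ carries inductive sequences of finite-dimensional C*-algebras to inductive sequences of simplicial groups, sending the connecting $*$-homomorphisms to their induced positive homomorphisms on $K_0$. The key compatibility observation is that a unital embedding pattern of matrix algebras has ``exactly one non-zero entry in each row of its multiplicity matrix'' precisely when the induced $K_0$-map is non-mixing in the sense of the paper; hence $\A\in\overrightarrow{\mathfrak D}$ (as a class of AF-algebras) if and only if $K_0(\A)\in\overrightarrow{\mathfrak D}$ (as a class of dimension groups), which is exactly how the paper has defined the overloaded notation.

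Next I would assemble the three ingredients that translate each side of the biconditional. On the trace side, the excerpt already records that $T(\A)$ is affinely homeomorphic to the state space $S(K_0(\A),[1_\A]_0)$ via $\tau\mapsto K_0(\tau)$, citing \cite[Proposition~1.5.5]{Rordam-Stormer}; so $T(\A)\cong T(\B)$ if and only if $S(K_0(\A),[1_\A]_0)\cong S(K_0(\B),[1_\B]_0)$ as Choquet simplices. On the tensor side, I would use that $\A\otimes\Q$ is again AF with $\la K_0(\A\otimes\Q),[1]\ra\cong \la K_0(\A),[1_\A]_0\ra\otimes\QQ$ by \cite[Proposition~3.4]{G-H} together with $\QQ$ being the $K_0$-group of $\Q$; combined with Elliott's classification this gives $\A\otimes\Q\cong\B\otimes\Q$ if and only if $\la K_0(\A),[1_\A]_0\ra\otimes\QQ\cong\la K_0(\B),[1_\B]_0\ra\otimes\QQ$, i.e. if and only if $\la K_0(\A),[1_\A]_0\ra\sim_{\QQ}\la K_0(\B),[1_\B]_0\ra$ in the sense of Definition~\ref{eq-def}.

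With these identifications in place the proof is immediate: set $\la G,u\ra=\la K_0(\A),[1_\A]_0\ra$ and $\la H,v\ra=\la K_0(\B),[1_\B]_0\ra$, both of which lie in $\overrightarrow{\mathfrak D}$ by the first paragraph. Then
$$
T(\A)\cong T(\B)\iff S(G,u)\cong S(H,v)\iff \la G,u\ra\sim_{\QQ}\la H,v\ra\iff \A\otimes\Q\cong\B\otimes\Q,
$$
where the middle equivalence is Theorem~\ref{thm-D} and the outer two are the dictionary established above. I would also note as a sanity check the remark in the introduction that $T(\A)\cong T(\A\otimes\Q)$ always holds since $\Q$ has a unique trace, which is the easy direction and is subsumed here.

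The only real obstacle is a bookkeeping one rather than a mathematical one: making sure the two notions of non-mixing truly coincide, i.e. that a unital finite-dimensional embedding $\bigoplus_k M_{p_k}\to\bigoplus_j M_{q_j}$ whose multiplicity matrix has a single non-zero entry per row induces on $K_0\cong\ZZ^{\#k}\to\ZZ^{\#j}$ exactly a map of the form $x\mapsto(m_1 x_{i_1},\dots,m_s x_{i_s})$ with positive $m_j$, and conversely; this is where one must be careful about the row-versus-column convention for multiplicity matrices and about the fact (already flagged in the paper) that non-mixing maps need not be injective, matching the fact that such embeddings need not be faithful on every summand. Once that identification is pinned down, everything else is a formal consequence of results quoted in the excerpt.
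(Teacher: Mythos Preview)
Your proposal is correct and is exactly the intended argument: the paper states this corollary without proof, relying on precisely the dictionary you spell out---$T(\A)\cong S(K_0(\A),[1_\A]_0)$, the tensor formula $\la K_0(\A\otimes\Q),[1]\ra\cong\la K_0(\A),[1_\A]_0\ra\otimes\QQ$, and Elliott's classification---to reduce to Theorem~\ref{thm-D}. Your write-up is in fact more detailed than what the paper provides.
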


\bibliographystyle{amsplain}

\end{document}